\newcommand{\N}{\mathbb{N}}
\newcommand{\Z}{\mathbb{Z}}
\newcommand{\R}{\mathbb{R}}
\newcommand{\C}{\mathbb{C}}
\newtheorem{theorem}{Theorem}[section]
\newtheorem{lemma}[theorem]{Lemma}
\newtheorem{cor}[theorem]{Corollary}
\theoremstyle{definition}
\newtheorem{remark}[theorem]{Remark}
\title{Limit laws for random matrix products}
\author{Jordan Emme\thanks{Universit\'{e} Paris-Sud, CNRS, LMO, UMR 8628, 91405 Orsay, France. E-mail: jordan.emme@math.u-psud.fr} \ and Pascal Hubert\thanks{Aix-Marseille Universit\'{e}, CNRS, Centrale Marseille, I2M, UMR 7373, 13453 Marseille, France.  E-mail: pascal.hubert@univ-amu.fr}}
\date{}
\begin{document}

\maketitle

\begin{abstract}
In this short note, we study the behaviour of a product of matrices with a simultaneous renormalization. Namely, for any sequence $(A_n)_{n\in \N}$ of $d\times d$ complex matrices whose mean $A$ exists and whose norms' means are bounded, the product $\left(I_d + \frac1n A_0 \right) \dots\relax \left(I_d + \frac1n A_{n-1} \right) $ converges towards $\exp{A}$. We give a dynamical version of this result as well as an illustration with an example of "random walk" on horocycles of the hyperbolic disc.
\end{abstract}

\section{Introduction}
 
 Products of random matrices --- or cocycles --- are generally studied and well understood via ergodic theory, martingales on Markov chains, or spectral theory for instance. For some literature in this direction, one can look at a book such as \cite{benoist_quint}, the surveys \cite{furman,ledrappier_survey}. Indeed, results like the Osseledec theorem give a precise asymptotic behaviour of a product of random matrices. It provides informations like the logarithmic growth rate of the norm of the matrices. However, in \cite{tcl} we encountered a random product of matrices that did not fit the usually studied case. Indeed, understanding the limit of the characteristic functions of a renormalized sequence of probability measures had to be achieved by understanding, for any parameter $t$, a random product of matrices of the form $\left(A_{X_0} + \frac tn B_{X_0} \right) \dots\relax \left(A_{X_n} + \frac tn B_{X_n} \right) $ where $\left( X_n\right)_{n \in \N}$ is a binary sequences and $A_0, A_1, B_0, B_1$ are fixed $2\times 2$ matrices. The scale of normalization is different from the standard one, thus the result is more precise. We obtained a convergence of the matrices instead of convergence of the logarithm of the norms. Nevertheless, the method involved heavily relied on the properties of the matrices and, as such, was ad hoc for the problem we were interested in. However, in a an effort to replicate and generalize this type of random product of matrices (namely by understanding Corollary \ref{c.hyperbolic_walk}), we stumbled upon a surprising general property of these types of products that we explicit in Theorem \ref{t.general}.

\begin{theorem}\label{t.general}
Let $\left( A_n \right)_{n \in \N}$ be a sequence of $d\times d$ complex matrices satisfying
$$
\lim_{n\rightarrow +\infty}\frac1n \sum_{k=0}^{n-1}A_k=A.
$$
and such that  $\left( \frac1n \sum_{k=0}^{n}\|A_k\|\right)_{n\in \N^*}$ is bounded for a norm $\| \cdot \|$ by $\alpha$.

Define, for any $t$ in $\C$ and any positive integer $n$
$$
\Pi_n(t)=\left(I_d + \frac{t}{n} A_0 \right)\dots\relax \left(I_d + \frac{t}{n} A_n \right).
$$
Then, 
$$
\forall t \in \C, \ \lim_{n\rightarrow +\infty}\Pi_n(t)=\exp(tA).
$$
\end{theorem}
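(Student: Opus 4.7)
The plan is to expand $\Pi_n(t)$ as a polynomial in $t$ with matrix coefficients, show that these coefficients converge to those of $\exp(tA)$, and recombine the series by a dominated convergence argument in the summation index. Expanding the product gives
$$\Pi_n(t) = \sum_{k=0}^{n+1} \frac{t^k}{n^k}\, T_k(n), \qquad T_k(n) := \sum_{0 \leq i_1 < \cdots < i_k \leq n} A_{i_1} A_{i_2} \cdots A_{i_k},$$
with $T_0(n) := I_d$, while $\exp(tA) = \sum_{k \ge 0} t^k A^k / k!$. It therefore suffices to prove (i) for each fixed $k$, $T_k(n)/n^k \to A^k/k!$; and (ii) a $k$-summable bound on $\|T_k(n)/n^k\|$ uniform in $n$, to pass to the limit term-by-term. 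Claim (ii) follows from the elementary symmetric estimate
$$\|T_k(n)\| \le \frac{1}{k!}\Bigl(\sum_{i=0}^n \|A_i\|\Bigr)^k \le \frac{((n+1)\alpha)^k}{k!},$$
which gives $\|T_k(n)/n^k\| \le (2\alpha)^k/k!$ for $n \ge 1$; together with $\|A^k/k!\| \le \alpha^k/k!$, this dominates the series $\sum_k (c_k(n) - c_k) t^k$ and reduces the theorem to (i).

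I would prove (i) by induction on $k$, the base case $k=1$ being the hypothesis itself. For the induction step, isolating the largest index produces the recursion
$$T_k(n) = \sum_{j=k-1}^{n} T_{k-1}(j-1)\, A_j.$$
Writing the induction hypothesis as $T_{k-1}(m) = \frac{m^{k-1}}{(k-1)!}\, A^{k-1} + m^{k-1}\varepsilon(m)$ with $\|\varepsilon(m)\|\to 0$ (and bounded a priori by (ii)), and pulling the constant factor $A^{k-1}/(k-1)!$ out of the sum on the left, the main contribution becomes $\frac{A^{k-1}}{(k-1)!}\cdot\frac{1}{n^k}\sum_{j=0}^n j^{k-1} A_j$. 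I would evaluate the weighted sum by Abel summation against the partial sums $S_j = \sum_{i\le j} A_i$, which satisfy $S_j/j \to A$ by hypothesis; coupled with a Riemann-sum computation for the weight $x\mapsto x^{k-1}$ on $[0,1]$, whose integral is $1/k$, this gives $\frac{1}{n^k}\sum_j j^{k-1} A_j \to A/k$, hence leading term $A^{k-1}/(k-1)!\cdot A/k = A^k/k!$, as required.

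The main obstacle is controlling the error $\sum_j (j-1)^{k-1}\,\varepsilon(j-1)\, A_j$ produced by the induction, since $\varepsilon(m)\to 0$ holds only asymptotically. I would split the range at $j = \lfloor \epsilon n\rfloor$: on $\{j \le \epsilon n\}$, the crude bounds $\|T_{k-1}(j-1)\|\le((\epsilon n)\alpha)^{k-1}/(k-1)!$ and $\sum_{j\le\epsilon n}\|A_j\|\le 2\epsilon n\alpha$ give a contribution of order $\epsilon^k n^k$; on $\{j > \epsilon n\}$, the uniform smallness $\sup_{m\ge\epsilon n -1}\|\varepsilon(m)\| \to 0$ combined with $\sum_j (j-1)^{k-1}\|A_j\| = O(n^k)$ yields $o(n^k)$. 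Sending $n\to\infty$ first and then $\epsilon\to 0$ absorbs both errors. The point that forces this careful splitting, rather than a direct multinomial expansion of $\bigl(\sum A_i\bigr)^k$, is that the hypothesis only controls the Cesàro average of $\|A_i\|$ and not higher moments such as $\sum \|A_i\|^2$, so diagonal contributions cannot be isolated as lower-order; keeping $A^{k-1}$ on the left of the sum and $A_j$ on the right is precisely what lets Abel summation handle the non-commutative matrix case as cleanly as the scalar one.
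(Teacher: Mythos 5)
Your proposal is correct and follows essentially the same route as the paper: expand the product into elementary symmetric sums, dominate each coefficient by $(C\alpha)^k/k!$ to justify term-by-term passage to the limit, and prove $T_k(n)/n^k\to A^k/k!$ by induction on $k$, peeling off the largest index and evaluating the resulting weighted average $\frac{1}{n^k}\sum_j j^{k-1}A_j\to A/k$ by Abel summation against the partial sums plus a Riemann sum. The one place you go beyond the paper is the explicit splitting at $j=\lfloor\epsilon n\rfloor$ to control the error term $\sum_j j^{k-1}\varepsilon(j)A_j$, which the paper dismisses as obvious; your version is the more careful one.
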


\begin{remark}
Here is an heuristic explanation of the statement of Theorem \ref{t.general}. Consider the  problem at the level of the Lie algebra.
The main term is $\frac tn \sum_{k=0}^{n-1}A_k$ and its limit $tA$. The limit of $\Pi_n$ is the exponential of the limit in the Lie algebra.
In a sense, at this scale, the behavior of the product is directed by the behavior of the sum $\sum_{k=0}^{n-1}A_k$ in the Lie algebra.
\end{remark}

An elementary version of Theorem \ref{t.general} for complex numbers is the following classical
\begin{lemma}\label{l.exp}
 Let $(u_n)_{n\in \N}$ be a bounded complex sequence whose mean converges towards $l$. Then
 $$
 \lim_{n\rightarrow +\infty}\prod_{k=0}^{n-1}\left( 1+ \frac{u_k}{n}\right)=e^l.
 $$
 \end{lemma}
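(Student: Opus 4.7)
The plan is to take logarithms and reduce the product to a sum that is controlled by the Cesàro hypothesis. Let $M$ be a bound for $|u_n|$. For $n > 2M$, we have $|u_k/n| < 1/2$ for every $k < n$, so $1 + u_k/n$ lies in the open right half-plane and the principal branch of the complex logarithm is well-defined and holomorphic there. Thus we can write
$$
\prod_{k=0}^{n-1}\left(1 + \frac{u_k}{n}\right) = \exp\left(\sum_{k=0}^{n-1} \log\left(1 + \frac{u_k}{n}\right)\right),
$$
and by continuity of $\exp$ it suffices to prove that the sum inside converges to $l$.

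Next, I would use the standard estimate $|\log(1+z) - z| \le C|z|^2$ valid on $|z| \le 1/2$ (for some absolute constant $C$), obtained from the Taylor expansion of the principal branch. This yields
$$
\left|\sum_{k=0}^{n-1} \log\left(1 + \frac{u_k}{n}\right) - \frac{1}{n}\sum_{k=0}^{n-1} u_k\right| \le C \sum_{k=0}^{n-1} \frac{|u_k|^2}{n^2} \le \frac{CM^2}{n},
$$
which tends to $0$. Combined with the hypothesis that $\frac{1}{n}\sum_{k=0}^{n-1} u_k \to l$, this gives $\sum_{k=0}^{n-1}\log(1+u_k/n) \to l$, and exponentiating yields the claimed limit $e^l$.

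There is essentially no obstacle here; the only mild subtlety is the choice of branch of the logarithm for complex $u_k$, which is handled cleanly by noting that for $n$ large the arguments $1+u_k/n$ are uniformly close to $1$, so the principal branch applies and the quadratic remainder is negligible. The proof is therefore a two-line computation once the logarithm is justified.
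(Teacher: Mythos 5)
Your proof is correct. The only point that needs care --- the choice of branch of the logarithm --- is handled properly: for $n>2M$ each factor $1+u_k/n$ lies in the disc $|z-1|\le 1/2$, the principal logarithm applies, and the identity $\prod_k(1+u_k/n)=\exp\bigl(\sum_k\log(1+u_k/n)\bigr)$ holds simply because $e^{a+b}=e^a e^b$. The quadratic remainder bound $|\log(1+z)-z|\le C|z|^2$ on $|z|\le 1/2$ then reduces everything to the Ces\`aro hypothesis, exactly as you say. Note, however, that this is a genuinely different route from the one the paper takes: the paper states the lemma as classical without proof, and its argument for the general Theorem \ref{t.general} (of which the lemma is the $d=1$ case) avoids logarithms entirely --- it expands the product into elementary symmetric sums $\left(\frac tn\right)^k\sum_{i_1<\dots<i_k}A_{i_1}\cdots A_{i_k}$, dominates the $k$-th term by $|t\alpha|^k/k!$, and identifies each limit as $\frac{1}{k!}A^k$ by induction via an Abel-summation lemma. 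The reason for that heavier machinery is that your argument leans on commutativity (both in $e^{a+b}=e^ae^b$ and in passing from a sum of logarithms to the logarithm-free error estimate term by term), so it does not survive the passage to non-commuting matrices. In short: your proof is the efficient one for scalars; the paper's combinatorial expansion is what buys the matrix generalization.
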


\noindent{\bf Acknowledgement}
We wish to thank Jayadev Athreya and Samuel Leli\`evre for their interest in this problem and for sharing ideas with us.

 
 \section{Proof of Theorem \ref{t.general}}

\begin{proof}
First let us write, for any $n$ and $t$,
$$
\Pi_n(t)=\sum_{k=0}^{n-1}\left( \frac tn \right)^k\left(\sum_{0\leq i_1<...<i_k \leq n-1}A_{i_1}...A_{i_k} \right)
$$
and notice that for any $k$, 
$$
\left\|\left( \frac tn \right)^k\left(\sum_{0\leq i_1<...<i_k \leq n-1}A_{i_1}...A_{i_k} \right)\right\|\leq 
\left( \frac tn \right)^k \sum_{0\leq i_1<...<i_k \leq n-1}\left\|A_{i_1}\right\|...\left\|A_{i_k}\right\|.
$$
Notice that by commutativity
$$
\sum_{0\leq i_1<...<i_k \leq n-1}\left\|A_{i_1}\right\|...\left\|A_{i_k}\right\| <\frac{1}{k!}\left( \sum_{k=0}^n \|A_k\|\right)^k
$$
so we have
$$
\left\|\left( \frac tn \right)^k\left(\sum_{0\leq i_1<...<i_k \leq n-1}A_{i_1}...A_{i_k} \right)\right\|\leq\frac{|t^k \alpha^k|}{k!}.
$$
Hence, by the dominated convergence theorem, in order to prove the theorem, we only need to show that, for any $k$,
$$
\lim_{n\rightarrow +\infty}\frac 1{n^k}\sum_{0\leq i_1<...<i_k \leq n-1}A_{i_1}...A_{i_k} =\frac 1{k!}A^k.
$$
We proceed by induction on $k$. The case $k=1$ is the hypothesis of the theorem. Let us assume that this property is true for a fixed integer $k$.
$$
\frac 1{n^{k+1}}\sum_{0\leq i_1<...<i_k<l \leq n-1}A_{i_1}...A_{i_k}A_{l}
=
\frac 1{n^{k+1}}\sum_{l=k}^{n-1}\left(\sum_{0\leq i_1<...<i_k< \leq l-1}A_{i_1}...A_{i_k}\right)A_{l}
$$
and by induction hypothesis there is a sequence $(\epsilon_l)_{l\in\N}$ going to zero such that for any $l$:
$$
\left(\sum_{0\leq i_1<...<i_k< \leq l-1}A_{i_1}...A_{i_k}\right)=\frac{l^k}{k!}A^k + l^k\epsilon_l.
$$
Hence 
$$
\frac 1{n^{k+1}}\sum_{0\leq i_1<...<i_k<l \leq n-1}A_{i_1}...A_{i_k}A_{l}
=
\frac 1{n^{k+1}k!}\sum_{l=k}^{n-1}\left(l^k A^k + l^k\epsilon_l \right)A_{l}
$$
and it follows that 
$$
\frac 1{n^{k+1}}\sum_{0\leq i_1<...<i_k<l \leq n-1}A_{i_1}...A_{i_k}A_{l}
=
\frac 1{n^{k+1}k!}A^k\sum_{l=k}^{n-1}l^k A_{l} + \frac 1{n^{k+1}k!}\sum_{l=k}^{n-1}l^k\epsilon_lA_{l}
$$
So in order to get the result, we must prove that
$$
\lim_{n\rightarrow+\infty}\frac 1{n^{k+1}}\sum_{l=k}^{n-1}l^k A_{l}=\frac 1{k+1}A
$$
and 
$$
\lim_{n\rightarrow+\infty}\frac 1{n^{k+1}}\sum_{l=k}^{n-1}l^k \epsilon_l A_{l}=0.
$$
Since the sequence $(\epsilon_l)_{n\in \N}$ goes to $0$ as $l$ goes to $+\infty$, and since the mean of the norms of the $\|A_i\|$ are bounded by hypothesis, it is obvious that the first limit implies the second. 

Let us write
$$
\frac 1{n^{k+1}}\sum_{l=k}^{n-1}l^k A_{l}
=
\frac 1{n}\sum_{l=k}^{n-1}\left(\frac{l}{n}\right)^k A_{l}
$$
and state the following.

\begin{lemma}\label{l.pascal_ergodic}
Let $\left( u_n\right)_{n \in \N}$ be a sequence with values in $\mathcal{M}_d( \C)$ whose mean converges towards $L$ and let $g$ be a function in $\mathcal{C}^1(\R)$. Then,
 $$
\lim_{n\rightarrow+\infty}\frac{1}{n}\sum_{l=0}^{n-1}g\left( \frac{l}{n} \right)u_l=L\int_{0}^1g(t)dt.
 $$
\end{lemma}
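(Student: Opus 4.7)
The plan is to apply Abel summation (summation by parts) in order to replace the weighted sum by one involving the Cesàro partial sums $S_N := \sum_{l=0}^{N-1} u_l$, which by hypothesis satisfy $S_N/N \to L$. Writing $u_l = S_{l+1} - S_l$ (with $S_0 = 0$) and summing by parts yields
$$
\frac{1}{n}\sum_{l=0}^{n-1} g\bigl(\tfrac{l}{n}\bigr)\, u_l
= g\bigl(\tfrac{n-1}{n}\bigr)\frac{S_n}{n} \;-\; \frac{1}{n}\sum_{l=0}^{n-2}\Bigl(g\bigl(\tfrac{l+1}{n}\bigr) - g\bigl(\tfrac{l}{n}\bigr)\Bigr) S_{l+1}.
$$
The boundary term converges to $g(1)\, L$ by continuity of $g$ and the hypothesis $S_n/n \to L$.

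Next I would exploit the $\mathcal{C}^1$ regularity of $g$: by the mean value theorem, $g\bigl(\tfrac{l+1}{n}\bigr) - g\bigl(\tfrac{l}{n}\bigr) = g'(\xi_l)/n$ for some $\xi_l \in [l/n,(l+1)/n]$, so the remainder rewrites as
$$
-\frac{1}{n}\sum_{l=0}^{n-2} g'(\xi_l)\,\frac{l+1}{n}\cdot \frac{S_{l+1}}{l+1}.
$$
Setting $S_{l+1}/(l+1) = L + v_l$ with $v_l \to 0$, the $L$-contribution becomes $-L$ times a Riemann sum for the continuous function $t \mapsto t\, g'(t)$ on $[0,1]$ and converges to $-L \int_0^1 t\, g'(t)\, dt$. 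The $v_l$-contribution is dominated in norm by $\|g'\|_\infty \cdot \tfrac{1}{n^2}\sum_{l=0}^{n-2}(l+1)\|v_l\|$; splitting the summation range at $l = \lfloor \varepsilon n \rfloor$ and using that $(v_l)$ is bounded and eventually of norm at most $\varepsilon$ yields a bound of order $\varepsilon$, so this term vanishes as $n \to \infty$.

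Adding the two limits gives $L\bigl(g(1) - \int_0^1 t\, g'(t)\, dt\bigr)$, and a single integration by parts, $\int_0^1 t\, g'(t)\, dt = g(1) - \int_0^1 g(t)\, dt$, collapses the expression to $L\int_0^1 g(t)\, dt$, which is the claimed identity. The main subtlety is reconciling the Riemann-sum convergence of the leading part with the mere Cesàro convergence of $S_{l+1}/(l+1)$, but the $\varepsilon$-splitting just described is the standard device to do so. The $\mathcal{C}^1$ hypothesis on $g$ is precisely what ensures that the summation-by-parts increments are uniformly of size $O(1/n)$, which is what makes the whole scheme work.
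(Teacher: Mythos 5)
Your proof is correct and follows essentially the same route as the paper's: an Abel summation to bring in the partial sums $S_l$, the mean value theorem to exploit the $\mathcal{C}^1$ hypothesis, and an $\varepsilon$-splitting to kill the error term coming from $S_l/l - L \to 0$. The only (immaterial) difference is that you evaluate the main term as a Riemann sum for $t\,g'(t)$ and integrate by parts at the end, whereas the paper undoes the summation by parts discretely to land directly on a Riemann sum for $g$.
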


Applying this lemma to  $(A_l)_{l\in \N}$ with $g:x\mapsto x^k$ yields the result.
\end{proof}

\begin{proof}[Proof of Lemma \ref{l.pascal_ergodic}]
We start with an Abel transform. With the notations of the lemma, let us denote, for any integer $n$, 
$$
S_n=\sum_{l=0}^n u_l
$$
and $S_{-1}=0$. For any $n$ in $\N$, 
$$
\frac{1}{n}\sum_{l=0}^{n-1}g\left( \frac{l}{n} \right)u_l=\frac{1}{n}\sum_{l=0}^{n-1}g\left( \frac{l}{n} \right)\left(S_l-S_{l-1}\right)
$$
and so
$$
\frac{1}{n}\sum_{l=0}^{n-1}g\left( \frac{l}{n} \right)u_l=\frac{1}{n}\sum_{l=0}^{n-1}g\left( \frac{l}{n} \right)S_l-\frac{1}{n}\sum_{l=0}^{n-1}g\left( \frac{l}{n} \right)S_{l-1}
$$
which yields
$$
\frac{1}{n}\sum_{l=0}^{n-1}g\left( \frac{l}{n} \right)u_l=\frac{1}{n}\sum_{l=0}^{n-2}S_l\left( g\left( \frac{l}{n} \right)-g\left( \frac{l+1}{n} \right)\right)+\frac1n S_{n-1}\cdot g\left(\frac{n-1}{n}\right).
$$
Now let us recall that $\lim_{n\rightarrow+\infty}\frac1n S_{n-1}=L$ hence there exists a sequence $(\epsilon_l)_{l\in \N}$ whose limit is zero such that for any $l$
$$
S_l=l(L+\epsilon_l).
$$
This, in turn, yields
$$
\frac{1}{n}\sum_{l=0}^{n-1}g\left( \frac{l}{n} \right)u_l
=
\frac{1}{n}\sum_{l=0}^{n-2}l(L+\epsilon_l)\left( g\left( \frac{l}{n} \right)-g\left( \frac{l+1}{n} \right)\right)
+
\frac1n (n-1)(L+\epsilon_{n-1})\cdot g\left(\frac{n-1}{n}\right).
$$
Now let us remark that
$$
\lim_{n\rightarrow +\infty}\frac1n (n\epsilon_{n-1}-L-\epsilon_{n-1})\cdot g\left(\frac{n-1}{n}\right)=0
$$
and that, since $g$ is differentiable, by the mean value theorem, for any couple of integers $l<n$, there exists a real $x$ in $[\frac ln, \frac{l+1}{n}]$ such that $g\left( \frac{l}{n} \right)-g\left( \frac{l+1}{n} \right)=\frac{g'(x)}{n}$. The function $g'$ being continuous, it is bounded on $[0,1]$, and since $\lim_{l\rightarrow +\infty}\epsilon_l=0$, we have
$$
\lim_{n\rightarrow+\infty}\frac{1}{n}\sum_{l=0}^{n-2}l\epsilon_l\left( g\left( \frac{l}{n} \right)-g\left( \frac{l+1}{n} \right)\right)=0.
$$
So, in order to complete the proof of this lemma, we must understand the asymptotic behaviour of
$$
\frac{L}{n}\sum_{l=0}^{n-2}l\left( g\left( \frac{l}{n} \right)-g\left( \frac{l+1}{n} \right)\right)
+
L\cdot g\left(\frac{n-1}{n}\right).
$$
First we write
$$
\frac{L}{n}\sum_{l=0}^{n-2}l\left( g\left( \frac{l}{n} \right)-g\left( \frac{l+1}{n} \right)\right)
+
L\cdot g\left(\frac{n-1}{n}\right)
=
\frac{L}{n}\sum_{l=1}^{n-2}g\left( \frac{l}{n} \right) - \frac{(n-2)}{n}L\cdot g\left(\frac{n-1}{n}\right)
+
L\cdot g\left(\frac{n-1}{n}\right).
$$
From here on, noticing that $\frac{1}{n}\sum_{l=1}^{n-2}g\left( \frac{l}{n} \right)$ is a Riemann sum yields the result, we indeed have
$$
\lim_{n\rightarrow+\infty}\frac{1}{n}\sum_{l=0}^{n-1}g\left( \frac{l}{n} \right)u_l=L\int_{0}^1g(t)dt.
$$
\end{proof}

\section{Applications to dynamics}



\begin{cor}
Let $\left( X,\mathcal{B},\mu,T\right)$ be a measured dynamical system, $\mu$ being an ergodic $T$-invariant probability measure. Let $A$ be function from $X$ to $\mathcal{M}_d(\C)$ such that each $A_{i,j}$ is in $L^1(X,\mu)$. Then, for almost every $x$ in $X$, 
$$
\lim_{n\rightarrow +\infty}\left( I_d +\frac{t}{n}A(x)\right) \dots\relax \left( I_d +\frac{t}{n}A\circ T^{n-1}(x)\right)=
\exp{\left(t  \int_{X} A(x)d\mu(x)\right)}.
$$
\end{cor}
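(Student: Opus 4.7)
The plan is to reduce the corollary to a pointwise application of Theorem \ref{t.general} along a typical orbit, with the two hypotheses of that theorem delivered by Birkhoff's pointwise ergodic theorem. For each $x \in X$ I set $A_k := A \circ T^k(x)$, so that the finite product appearing in the corollary is precisely $\Pi_n(t)$ from Theorem \ref{t.general} associated with this sequence.

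To verify the mean-convergence hypothesis, I write $A = (A_{i,j})_{1 \leq i,j \leq d}$ with each $A_{i,j} \in L^1(X,\mu)$, apply Birkhoff's theorem entry by entry, and use the ergodicity of $\mu$ to identify each coordinate limit with the corresponding $\mu$-integral. Intersecting the $d^2$ resulting full-measure sets yields a full-measure set $X_1 \subset X$ on which
$$
\frac{1}{n} \sum_{k=0}^{n-1} A \circ T^k(x) \; \longrightarrow \; \int_X A \, d\mu.
$$

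To verify the boundedness hypothesis on the norms, I note that any norm on $\mathcal{M}_d(\C)$ is equivalent to (say) the entrywise $\ell^1$-norm, so that $x \mapsto \|A(x)\|$ lies in $L^1(X,\mu)$. A second application of Birkhoff produces a full-measure set $X_2 \subset X$ on which $\frac{1}{n}\sum_{k=0}^{n-1} \|A \circ T^k(x)\|$ converges to $\int_X \|A\|\, d\mu$ and is, in particular, bounded in $n$ by some finite constant $\alpha(x)$ depending only on $x$.

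For every $x \in X_1 \cap X_2$ the sequence $(A \circ T^k(x))_{k \in \N}$ satisfies the two hypotheses of Theorem \ref{t.general}, whose conclusion is exactly the statement of the corollary. I do not anticipate a substantive obstacle: the only mildly delicate point is that the exceptional null set should be independent of $t$, which is immediate since $X_1$ and $X_2$ are defined by $t$-free Birkhoff conditions, while Theorem \ref{t.general}, once applied at such an $x$, yields the limit for every $t \in \C$ simultaneously.
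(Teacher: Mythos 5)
Your proposal is correct and follows exactly the route the paper intends: its proof of this corollary is the one-line remark that one applies Theorem \ref{t.general} via Birkhoff's pointwise ergodic theorem, and your write-up simply supplies the (entirely routine) details — entrywise Birkhoff for the mean hypothesis, Birkhoff applied to $\|A(\cdot)\|\in L^1$ for the boundedness hypothesis, and the observation that the exceptional null set is $t$-free.
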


\begin{proof}
This is just a matter of writing Theorem \ref{t.general} using Birkhoff's pointwise ergodic theorem.
\end{proof}


The fact that this theorem applies to generic points of ergodic probability measures  is useful to understand some dynamical systems as we illustrate with the following corollary which gives the asymptotic law of a "random walk" on horocycles of the hyperbolic disc.

\begin{cor}\label{c.hyperbolic_walk}
 Let $A_1=\begin{pmatrix}
           0 & 1 \\
           0 & 0
          \end{pmatrix}$
and $A_2=\begin{pmatrix}
           0 & 0 \\
           1 & 0
          \end{pmatrix}$.
Let $\mu$ be a shift-invariant ergodic probability measure on $\{1,2\}^\N$. Then, for $\mu$ almost every $x$ in $\{1,2\}^\N$, and every $t$ in $\R$,

$$
\left( I_2 +\frac{t}{n}A_{x_0}\right) \dots\relax \left( I_2 +\frac{t}{n}A_{x_{n-1}}\right)=\begin{pmatrix}
                                    \cosh\left( \frac{t}{\sqrt{\mu\left( [1] \right)\mu\left( [2] \right)}} \right)
                                    &
                                    \sqrt{\frac{\mu\left( [2] \right)}{\mu\left( [1] \right)}}\sinh\left( \frac{t}{\sqrt{\mu\left( [1] \right)\mu\left( [2] \right)}} \right)
                                    \\
                                    \sqrt{\frac{\mu\left( [0] \right)}{\mu\left( [2] \right)}}\sinh\left( \frac{t}{\sqrt{\mu\left( [1] \right)\mu\left( [2] \right)}} \right)
                                    &
                                    \cosh\left( \frac{t}{\sqrt{\mu\left( [1] \right)\mu\left( [2] \right)}} \right)
                                   \end{pmatrix}.
$$
\end{cor}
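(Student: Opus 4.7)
The plan is to invoke the preceding corollary (the dynamical version of Theorem~\ref{t.general}) with the dynamical system $(X,\mathcal B,\mu,T) = (\{1,2\}^\N, \mathcal B, \mu, \sigma)$ where $\sigma$ is the left shift, and with the matrix-valued function $A : x \mapsto A_{x_0}$. Each entry of $A$ is bounded, hence in $L^1(\mu)$, so the corollary applies.

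The first step is to identify the limiting matrix. Applying Birkhoff's pointwise ergodic theorem to the indicator functions of the cylinders $[1]$ and $[2]$ shows that, $\mu$-almost surely,
$$
\int_{\{1,2\}^\N} A(x)\,d\mu(x) \;=\; \mu([1])\,A_1 + \mu([2])\,A_2 \;=\; B, \qquad \text{where } B = \begin{pmatrix} 0 & \mu([1]) \\ \mu([2]) & 0 \end{pmatrix}.
$$
By the preceding corollary, for $\mu$-a.e.\ $x$ the product in question converges to $\exp(tB)$.

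The remaining step is the explicit computation of $\exp(tB)$. The key observation is that $B^2 = \mu([1])\mu([2])\,I_2$. Setting $s := \sqrt{\mu([1])\mu([2])}$, we get $B^{2k} = s^{2k} I_2$ and $B^{2k+1} = s^{2k} B$, so summing the exponential series splits into $\cosh$ and $\sinh$ pieces:
$$
\exp(tB) \;=\; \cosh(ts)\,I_2 \;+\; \frac{\sinh(ts)}{s}\,B.
$$
Writing this out in matrix form, with $B_{12}/s = \sqrt{\mu([1])/\mu([2])}$ and $B_{21}/s = \sqrt{\mu([2])/\mu([1])}$, yields the stated expression (up to what appear to be minor typographical swaps in the statement).

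There is no genuine obstacle here: once the preceding corollary is available, the argument is a direct application plus an elementary computation. The only point requiring mild care is the diagonalisability structure of $B$, exploited through the identity $B^2 = \det(-B)\,I_2$ (a consequence of Cayley--Hamilton, since $B$ has zero trace), which is what makes the closed form in $\cosh$ and $\sinh$ available.
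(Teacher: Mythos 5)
Your proof is correct and follows exactly the route the paper takes (and largely leaves implicit): apply the preceding dynamical corollary to the shift on $\{1,2\}^\N$ with $A(x)=A_{x_0}$, identify the mean as $B=\mu([1])A_1+\mu([2])A_2$, and compute $\exp(tB)$ using $B^2=\mu([1])\mu([2])\,I_2$. Your closed form is the right one, and the discrepancies you flag are genuine typos in the paper's statement: the off-diagonal square-root factors are swapped, $\mu([0])$ should read $\mu([1])$, and the hyperbolic functions should be evaluated at $t\sqrt{\mu([1])\mu([2])}$ rather than $t/\sqrt{\mu([1])\mu([2])}$.
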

\begin{proof}
One easily computes $\exp \begin{pmatrix}
							0          &  t \mu([1]) \\
							t \mu([2]) &  0
					      \end{pmatrix}$ to get the desired result.

\end{proof}

 \begin{figure}[!ht]
  \center\includegraphics[scale=0.5]{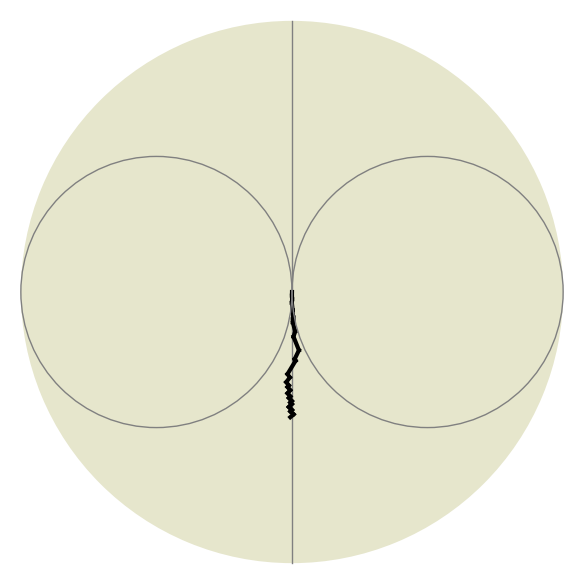}
  \caption{An illustration of a random hyperbolic walk}\label{f.hyp_disc}
 \end{figure}

 \begin{remark}
 Remark that Corollary \ref{c.hyperbolic_walk} can be interpreted in a geometric way given that $I_2+A_1$ and $I_2+A_2$ are generators of $SL_2(\Z)$. Notice that taking, for instance, $\mu$ to be the symmetric Bernoulli measure on $\{1,2\}^\N$, one gets $  \Pi_X(t)=\begin{pmatrix}
                                    \cosh\left( \frac{t}{2} \right)
                                    &
                                    \sinh\left( \frac{t}{2} \right)
                                    \\
                                    \sinh\left( \frac{t}{2} \right)
                                    &
                                    \cosh\left( \frac{t}{2} \right)
                                   \end{pmatrix}
                                   $
which is conjugated to $\begin{pmatrix}
                                    e^{\frac t2}
                                    &
                                    0
                                    \\
                                    0
                                    &
                                    e^{-\frac t2}
                                    \end{pmatrix}$
by a rotation of angle $\pi/4$. Hence almost every "random walk" (with simultaneous renormalization) on two horocycles of the hyperbolic disc converges towards a unique point on the geodesic represented by the vertical diameter of the disc as illustrated on Figure \ref{f.hyp_disc}\footnote{The calculations are made in the hyperbolic plane but the picture is presented in the disc since it is more symmetric.}.

 \end{remark}

\bibliographystyle{plain}
\bibliography{biblio}

\begin{thebibliography}{1}

\bibitem{benoist_quint}
Yves Benoist and Jean-Fran\c{c}ois Quint.
\newblock {\em Random walks on reductive groups}, volume~62 of {\em Ergebnisse
  der Mathematik und ihrer Grenzgebiete. 3. Folge. A Series of Modern Surveys
  in Mathematics [Results in Mathematics and Related Areas. 3rd Series. A
  Series of Modern Surveys in Mathematics]}.
\newblock Springer, Cham, 2016.

\bibitem{tcl}
J.~Emme and P.~Hubert.
\newblock Central limit theorem for probability measures defined by
  sum-of-digits function in base 2.
\newblock {\em Preprint}, (arXiv:1605.06297), 2016.

\bibitem{furman}
Alex Furman.
\newblock Random walks on groups and random transformations.
\newblock In {\em Handbook of dynamical systems, {V}ol.\ 1{A}}, pages
  931--1014. North-Holland, Amsterdam, 2002.

\bibitem{ledrappier_survey}
Fran\c{c}ois Ledrappier.
\newblock Some asymptotic properties of random walks on free groups.
\newblock In {\em Topics in probability and {L}ie groups: boundary theory},
  volume~28 of {\em CRM Proc. Lecture Notes}, pages 117--152. Amer. Math. Soc.,
  Providence, RI, 2001.

\end{thebibliography}

\end{document}